\newdimen\AAdi%
\newbox\AAbo%
\def\AAk#1#2{\s_etbox\AAbo=\hbox{#2}\AAdi=\wd\AAbo\kern#1\AAdi{}}%
\def\AAr#1#2#3{\s_etbox\AAbo=\hbox{#2}\AAdi=\ht\AAbo\raise#1\AAdi\hbox{#3}}%
\font\tenmsb=msbm10 at 12pt \font\sevenmsb=msbm7 at 8pt
\font\fivemsb=msbm5 at 6pt
\def\Bbb#1{{\tenmsb\fam\msbfam#1}}
\newtheorem{thm}{Theorem}[section]
\newtheorem{lem}{Lemma}[section]
\newtheorem{cor}{Corollary}[section]
\newtheorem{rem}{Remark}[section]
\newtheorem{pro}{Proposition}[section]
\newcommand{\ba}{\begin{array}}
\newcommand{\ea}{\end{array}}
\newcommand{\Section}[2]{\setcounter{equation}{0}
\allowdisplaybreaks
\section[#1]{#2}}
\def\n{\nabla}
\def\bn{\overline\nabla}
\def\ir#1{\mathbb R^{#1}}
\def\f#1#2{\frac{#1}{#2}}
\def\grs#1#2{\bold G_{#1,#2}}
\def\dt#1{\frac {d\,#1}{d\,t}}
\def\mc#1{\mathcal{#1}}
\def\pd#1#2{\frac {\partial #1}{\partial #2}}
\def\a{\alpha}
\def\be{\beta}
\def\p#1{\partial #1}
\def\de{\delta}
\def\De{\Delta}
\def\e{\eta}
\def\eps{\epsilon}
\def\G{\Gamma}
\def\g{\gamma}
\def\la{\lambda}
\def\Om{\Omega}
\def\th{\theta}
\def\w{\wedge}
\def\R{\Bbb{R}}
\def\bn{\bar{\nabla}}
\subjclass{58E20,53A10.}
\begin{document}
\title
[Some results on space-like self-shrinkers] {Some results on
space-like self-shrinkers}

\author
[Huaqiao Liu and Y. L. Xin]{Huaqiao Liu and Y. L. Xin}
\address{School of Mathematics and Information Sciences, Henan University, Kaifeng 475004, China.}
\email{liuhuaqiao@henu.edu.cn}
\address {Institute of Mathematics, Fudan University,
Shanghai 200433, China.} \email{ylxin@fudan.edu.cn}

\thanks{The  authors  are supported partially by NSFC}

\begin{abstract}
We study space-like self-shrinkers of dimension $n$ in
pseudo-Euclidean space $\ir{m+n}_m$with index $m$. We derive drift
Laplacian of the basic geometric quantities and obtain their
volume estimates in pseudo-distance function. Finally, we prove
rigidity results under minor growth conditions in terms of the mean curvature or
the image of Gauss maps .
\end{abstract}
\maketitle

\Section{Introduction}{Introduction}

Let $\ir{m+n}_m$ be an $(m+n)$-dimensional pseudo-Euclidean space
with the index $m$. The indefinite flat metric on $\ir{m+n}_m$ is
defined by $ds^2=\sum_{i=1}^n (dx^i)^2-\sum_{\a=n+1}^{m+n}
(dx^{\a})^2.$ In what follows we agree with the following range of
indices
$$A,\,B,\,C,\cdots=1,\cdots,m+n;\,
i,\,j,\,k\cdots=1,\cdots, n;$$
$$s, t=1,\,\cdots,m;\,\a,\,\be,\cdots=n+1,\cdots,m+n.$$

Let $F:M\to \ir{m+n}_m$ be a space-like $n$-dimensional submanifold
in $\ir{m+n}_m$ with the second fundamental form $B$ defined by
$B_{XY}\mathop{=}\limits^{def.}\left(\bn_X Y\right)^N$ for $X, Y\in
\G(TM)$. We denote $(\cdots)^T$ and $(\cdots)^N$ for the orthogonal
projections into the tangent bundle $TM$ and the normal bundle $NM$,
respectively. For $\nu\in\G(NM)$ we define the shape operator
$A^\nu: TM\to TM$  by $A^\nu(V)= - (\bn_V\nu)^T.$ Taking the trace
of $B$ gives the mean curvature vector $H$ of $M$ in $\ir{m+n}_m$
and $H\mathop{=}\limits^{def.} \text{trace}(B) =B_{e_i e_i},$ where
$\{e_i\}$ is a local orthonormal frame field of $M.$ Here and in the
sequel we use the summation convention. The mean curvature vector is
time-like, and a cross-section of the normal bundle.

We now consider  a one-parameter family $F_t=F(\cdot, t)$ of
immersions $F_t:M\to \ir{m+n}_m$ with the corresponding images
$M_t=F_t(M)$ such that
\begin{equation}\begin{split}
\dt{}F(x, t)&=H(x, t),\qquad x\in M\\
F(x, 0)&=F(x)
\end{split}\label{mcf}
\end{equation}
are satisfied, where $H(x, t)$ is the mean curvature vector of $M_t$
at $F(x, t).$  There are many interesting results on mean curvature
flow on space-like hypersurfaces in certain Lorentzian manifolds
\cite{E1,E2, E3,E-H}. For higher codimension we refer to the
previous work of the second author \cite{X}.

A special but important  class of solutions to (\ref{mcf}) are
self-similar shrinking solutions, whose profiles, space-like
self-shrinkers, satisfy a system of quasi-linear elliptic PDE of the
second order
\begin{equation}\label{ss}
H = -\frac{X^N}{2}.
\end{equation}

Besides the Lagrangian space-like self-shrinkers \cite{C-C-Y, H-W,
D-X1}, there is an interesting paper on curves in the Minkowski plane \cite{H}. The present paper is
devoted to general situation on space-like self-shrinker.

For a space-like $n-$submanifold $M$ in $\ir{m+n}_m$ we have the
Gauss map $\g:M\to \grs{n}{m}^m$. The target manifold is a
pseudo-Grassmann manifold, dual space of the Grassmann manifold
$\grs{n}{m}$. In the next section we will describe its geometric
properties, which will be used in the paper.

Choose a Lorentzian frame field $\{e_i, e_\a\}$ in $\ir{m+n}_m$ with
space-like $\{e_i\}\in TM$ and time-like $\{e_\a\}\in NM$ along the
space-like submanifold $F:M\to \ir{m+n}_m$. Define coordinate
functions
$$x^i=\left<F, e_i\right>,\;y^\a=-\left<F, e_\a\right>.$$
We then have
$$|F|^2=X^2-Y^2,$$
where $X=\sqrt{\sum_{i=1}^n (x^i)^2},\quad
Y=\sqrt{\sum_{\a=n+1}^{m+n} (y^\a)^2}.$ We call  $|F|^2$ the
pseudo-distance function from the origin $0\in M$.

We always put the origin on $M$ in the paper. We see that $|F|^2$ is
invariant under the Lorentzian action up to the choice of the origin
in $\ir{m+n}_m$. Set $z=|F|^2$. It has been proved that $z$ is
proper provided $M$ is closed with the Euclidean topology (see
\cite{C-Y} for $m=1$ and \cite{J-X} for any codimension $m$).

Following Colding and Minicozzi \cite{C-M} we can also introduce the
drift Laplacian,
\begin{equation}\aligned
\mc{L}=\De-\f{1}{2}\langle
F,\n(\cdot)\rangle=e^{\f{z}{4}}div(e^{-\f{z}{4}}\n(\cdot)).
\endaligned\label{dL}\end{equation}
It can be showed that $\mc{L}$ is self-adjoint with respect to the
weighted volume element $e^{-\f{z}{4}}d\mu,$ where $d\mu$ is the volume
element of $M$ with respect to the induced metric from the ambient
space $\ir{m+n}_m$. In the present paper we carry out integrations
with respect to this measure. We denote $\rho=e^{-\f{z}{4}}$ and the
volume form $d\mu$ might be omitted in the integrations for
notational simplicity.

For a space-like submanifold in $\ir{m+n}_m$ there are several
geometric quantities. The squared norm of the second fundamental
form $|B|^2$, the squared norm of the mean curvature $|H|^2$ and the
$w-$function, which is related to the image of the Gauss map. In \S
3 we will calculate drift Laplacian $\mc{L}$ of those quantities,
see Proposition \ref{dLBHw}.

Corresponding to the weighted measure and drift Laplacian there is so-called
the Baker-Emery Ricci tensor. It is noted that in \cite{C-Q} $\text{Ric}_f\ge \f{z}{4}$ with $f=\f{z}{4}$.
Using the comparison technique the weighted volume of the geodesic ball can be estimated from above
in terms of the distance function \cite{W-W}.

For a space-like $n-$submanifold $M$ in $\ir{m+n}_m$ there are 3 kind global conditions: Closed one with Euclidean topology;
entire graph; complete with induced Riemannian metric. A complete space-like one has to be entire graph, but
the converse claim is not always the case. Closed one with Euclidean topologg is complete under the parallel mean curvature
assumption (see \cite{C-Y} for codimension one and \cite{J-X} for higher codimension).

In our case of closed one with Euclidean topology, the pseudo-distance function $z$ is always proper. It is
natural to consider the volume growth in $z$. For the proper
self-shrinkers in Euclidean space Ding-Xin \cite{D-X} gave the
volume estimates. It has been generalized in \cite{C-Z} for more
general situation. But, the present case does not satisfy the
conditions in Theorem 1.1 in \cite{C-Z}. However, the idea in
\cite{D-X} is still applicable for space-like self-shrinkers. In \S
4 we will give volume estimates for space-like self-shrinkers, in a
similar manner as in \cite{D-X}, see Theorem \ref{vg}.

Finally, using integral method we can obtain  rigidity results as
follows.

\begin{thm}
Let $M$ be a space-like self-shrinker of dimension $n$ in
$R^{n+m}_m,$ which is closed with respect to the Euclidean topology.
If there is a constant $\a<\f{1}{8}$, such that $|H|^2\le e^{\a z}$, then $M$
is an affine $n-$plane.
\end{thm}

\begin{thm}
Let $M$ be a complete space-like self-shrinker of dimension $n$ in
$R^{n+m}_m$. If there is a constant $\a<\f{1}{2}$, such that $\ln w\le e^{\a d^2(p, x)}$ for certain
$p\in M$, where $d(p,\cdot)$ is the distance function from $p$, then $M$ is affine $n-$plane.
\end{thm}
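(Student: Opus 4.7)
The strategy is a Caccioppoli/Liouville-type integral estimate in the weighted measure $\rho\,d\mu=e^{-z/4}d\mu$, with analytic input the drift-Laplacian identity for $\ln w$ established in Proposition \ref{dLBHw}. Set $v:=\ln w$; then $v\ge 0$ everywhere because $w\ge 1$ on any space-like submanifold, and $v\equiv 0$ already forces the tangent plane to be constant and hence $M$ to be an affine $n$-plane, so the interesting case is $v\not\equiv 0$.

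First, I would extract from Proposition \ref{dLBHw} a pointwise differential inequality of the form
$$\mc{L}v \;\ge\; c\,|B|^{2},$$
with $c>0$ a universal constant depending only on $n,m$. The positivity of the right-hand side reflects that the target pseudo-Grassmann $\grs{n}{m}^{m}$ is a Riemannian symmetric space of non-compact type, so the Bochner/Ruh--Vilms curvature terms contribute with a favourable sign; this is the only step in which the sign conventions of the space-like setting and the self-shrinker equation $H=-X^{N}/2$ enter in an essential way.

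Next, pick a cut-off $\eta=\varphi\!\circ\!d(p,\cdot)$ with $\varphi\equiv 1$ on $[0,R]$, $\varphi\equiv 0$ on $[2R,\infty)$, and $|\varphi'|\le 2/R$. Since $M$ is complete, $\eta$ has compact support, so $\eta^{2}v$ is a legitimate test function against the self-adjoint operator $\mc{L}$ on $(M,\rho\,d\mu)$. Self-adjointness yields
$$\int_{M}\eta^{2}v\,\mc{L}v\,\rho\,d\mu \;=\; -2\int_{M}\eta v\,\lan\nabla\eta,\nabla v\ran\,\rho\,d\mu \;-\;\int_{M}\eta^{2}|\nabla v|^{2}\,\rho\,d\mu.$$
Combining with $\mc{L}v\ge c|B|^{2}$ and absorbing the cross term via $2ab\le a^{2}+b^{2}$ (with $a=\eta|\nabla v|$, $b=v|\nabla\eta|$) gives the Caccioppoli inequality
$$c\int_{M}\eta^{2}v\,|B|^{2}\,\rho\,d\mu \;\le\; \int_{M} v^{2}\,|\nabla\eta|^{2}\,\rho\,d\mu \;\le\; \frac{4}{R^{2}}\int_{B_{2R}(p)\setminus B_{R}(p)} v^{2}\,\rho\,d\mu.$$
On $B_{2R}(p)$ the hypothesis $v\le e^{\a d^{2}}$ gives $v^{2}\le e^{8\a R^{2}}$, so the right-hand side is dominated by $(4/R^{2})\,e^{8\a R^{2}}\,\mathrm{vol}_{\rho}(B_{2R}(p))$. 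The weighted volume of $M$ is controlled through the Bakry--Emery comparison using $\mathrm{Ric}_{f}\ge z/4\ge 0$ (cf.\ \cite{C-Q} as recalled in the introduction), and the condition $\a<\tfrac{1}{2}$ is precisely what is needed for the exponent $8\a$ to be beaten by that volume bound. Hence the right-hand side tends to $0$ as $R\to\infty$, forcing $|B|\equiv 0$ on $\{v>0\}$ and (by local rigidity where $v\equiv 0$) on all of $M$; therefore $M$ is totally geodesic in $\ir{n+m}_{m}$ and thus an affine $n$-plane.

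The hard part will be twofold: first, distilling from Proposition \ref{dLBHw} the sharp inequality $\mc{L}v\ge c|B|^{2}$ and checking that the Cauchy--Schwarz absorption above still leaves a strictly positive coefficient in front of $|B|^{2}$; and second, producing a weighted volume bound sharp enough that the critical constant $\tfrac{1}{2}$ emerges cleanly. This threshold $\tfrac{1}{2}$ is the Gaussian rate associated with the drift $-\tfrac{1}{2}\lan F,\nabla\cdot\ran$ in $\mc{L}$, analogous to the Colding--Minicozzi polynomial-growth criterion in the Euclidean self-shrinker theory, but expressed here in the intrinsic distance $d(p,\cdot)$ rather than in the pseudo-distance $z$.
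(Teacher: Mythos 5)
Your skeleton does match the paper's: the analytic input is (\ref{dLlogw}) together with a Bakry--Emery Liouville theorem, and indeed the paper disposes of the theorem in two lines by citing Corollary 4.2 of \cite{W-W}, applied with the bound $\text{Ric}_f\ge\f{1}{2}$ computed at the end of \S 4 (via the Gauss equation and the shrinker equation) and the weighted volume estimate (\ref{WW}). But your proposal contains a false intermediate claim. Proposition \ref{dLBHw} does \emph{not} yield $\mc{L}v\ge c|B|^2$ with a universal $c>0$: the inequality actually proved is $\mc{L}(\ln w)\ge |B|^2/w^2=e^{-2v}|B|^2$, where the factor $\prod_i(1-\la_i^2)=w^{-2}$ degenerates wherever the Gauss image is far from the reference plane $A$ --- exactly the regime your hypothesis permits, since $\ln w$ is allowed to grow like $e^{\a d^2}$. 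There is no favourable curvature sign that removes this weight, so ``distilling'' your inequality is impossible, not merely hard. The correct ordering of steps, which is the paper's, is to use only $\mc{L}v\ge 0$ to conclude that $v$ is \emph{constant}, and then feed constancy back into $\mc{L}v\ge e^{-2v}|B|^2$ pointwise to force $|B|\equiv 0$; this also makes your hand-waved patch (``local rigidity where $v\equiv0$'') unnecessary.

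The quantitative part of your argument also fails as written, in two ways. First, the operative curvature bound is not ``$\text{Ric}_f\ge z/4\ge 0$'': a merely nonnegative Bakry--Emery bound gives no Gaussian volume decay whatsoever, and your right-hand side $\f{4}{R^2}e^{8\a R^2}\,\mathrm{vol}_\rho(B_{2R}(p))$ would then have no reason to tend to zero. What the paper uses is $\text{Ric}_f\ge\f{1}{2}$, from which (\ref{WW}) gives annulus weighted volume of order $e^{-\f{1}{2}R^2+CR}$. Second, even granting that decay, your Caccioppoli inequality on $B_{2R}(p)\setminus B_R(p)$ carries $v^2\le e^{8\a R^2}$, so the product tends to zero only when $8\a<\f{1}{2}$, i.e. $\a<\f{1}{16}$; replacing doubling annuli by unit-width annuli still only reaches $\a<\f{1}{4}$. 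The squaring of $v$ forced by your Cauchy--Schwarz absorption costs a factor of $2$ in the exponent that the naive cut-off scheme cannot recover, so the assertion that ``$\a<\tfrac12$ is precisely what is needed'' is unsubstantiated. The paper attains the stated threshold exactly because it invokes the ready-made Liouville theorem (Corollary 4.2 of \cite{W-W}) rather than re-deriving it by this crude test-function argument; a self-contained proof along your lines would need either the sharper Wei--Wylie machinery or a weaker threshold in the statement.
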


\begin{rem}
In the special situation, for the Lagrangian space-like
self-shrinkers, the rigidity results hold without the growth
condition (see \cite{D-X1}). Let $\R^{2n}_n$ be Euclidean space with null coordinates
$(x, y) = (x_1, \cdots, x_n; \, y_1, \cdots, y_n)$, which means that
the indefinite metric is defined by $ds^2=\sum_i dx_i dy_i.$ If
$M=\{(x,Du(x))\big|\ x\in\R^n\}$ is a space-like submanifold in
$\R^{2n}_n$, then $u$ is convex and the induced metric on $M$ is given by $ds^2=\sum_{i,j}u_{ij}dx_idx_j$.
$M$ is a space-like Lagrangian submanifold in $\ir{2n}_n$. It is worthy to point out that
if $M$ is entire gradient graph the potential function $u$ is proper, as the following consideration.
On $\ir{n}$ set $\rho=|x|=\sqrt{\sum x_i^2}$. At any direction $\th\in S^{n-1}$
$$u_i=u_\rho\pd{\rho}{x_i}=\f{x_i}{\rho}u_\rho$$
and the pseudo-distance
$$z=x_iu_i=\rho u_\rho,$$
which is positive when the origin is on $M$, since it is space-like. It implies that $u$ is increasing in $\rho$.
Moreover,
$$z_\rho=u_\rho+\rho u_{\rho\rho}>0,$$
which means that $z$ is also increasing in $\rho$. Hence,
$$u(\rho)-u(\eps)=\int_\eps^\rho u_\rho d\rho=\int_\eps^\rho\f{z}{\rho}d\rho\ge z(\eps)\int_\eps^\rho\f{1}{\rho}d\rho
\ge z(\eps)\int_\eps^\rho\f{1}{\rho}d\rho\to\infty$$
as $\rho\to\infty$.
\end{rem}

\begin{rem}
Rigidity problem for space-like extremal submanifolds was raised by
E. Calabi \cite{C}, and solved by Cheng-Yau \cite{C-Y} for
codimension $1$. Later, Jost-Xin generalized the results to higher
codimension \cite{J-X}. The rigidity problem for space-like
submanifolds with parallel mean curvature was studied in
\cite{X1}\cite{X-Y} and \cite{J-X} (see also in Chap. 8 of
\cite{X2}). \end{rem}

\bigskip

\Section{Geometry of $G_{n,m}^m$}{Geometry of $G_{n,m}^m$}

\medskip

In $\ir{n+m}_m$ all space-like $n-$subspaces form the
pseudo-Grassmannian $G^{m}_{n,m}.$ It is a specific Cartan-Hadamard
manifold which is the noncompact dual space of the Grassmann
manifold $G_{n,m}.$

Let $P$ and $A\in G_{n,m}^m$ be two space-like $n-$plane in
$R_m^{m+n}.$ The angles between $P$ and $A$ are defined by the
critical values of angel $\theta$ between a nonzero vector $x$ in P
and its orthogonal projection $x^*$ in $A$ as $x$ runs through $P$.
\par Assume that $e_1,\cdots, e_n$ are orthonormal vectors which
span the space-like $P$ and $a_1,\cdots,a_n$ for space-like $A.$ For
a nonzero vector in $P$
$$x=\sum_i x_i e_i,$$its orthonormal projections in $A$ is
$$x^*=\sum_ix_i^* a_i.$$ Thus, for any $y\in A,$ we have $$\langle
x-x^*,y\rangle=0.$$ Set $$W_{i\,j}=\langle e_i,a_j\rangle,$$ We then
have $$x_j^*=\sum_iW_{i\,j}x_i.$$ Since $x$ is a vector in a
space-like $n-$plane and its projection $x^*$ in $A$ is also a
space-like vector. We then have a Minkowski plane $R_1^2$ spanned by
$x$ and $x^*.$ Then angle $\theta$ between $x$ and $x^*$ is defined
by $$\cosh \theta=\f{\langle x,x^*\rangle}{|x||x^*|}.$$

Let$$W=(W_{i\,j})=\left(\begin{array}{lll}\langle
e_1,a_1\rangle&\cdots&\langle e_n,a_1\rangle\\ \quad\vdots&\
\vdots&\quad\vdots\\\langle e_n,a_1\rangle&\cdots&\langle
e_n,a_n\rangle\end{array}\right)$$ Now define the $w-$function as
$$w=\langle e_1\w\cdots\w e_n,a_1\w\cdots\w a_n\rangle=\det W.$$
$W^TW$ is symmetric, its eigenvalues are $\mu_1^2,\cdots,\mu_n^2,$
then there exist $e_1,\cdots,e_n$ in $P$, such that
$$W^TW=\left(\begin{array}{lll}\mu_1^2&&0\\\quad&\ddots&\quad\\0&&\mu_n^2\end{array}\right),$$
in which $\mu_i\ge1$ and $ \mu_i=\cosh\theta_i.$ Then
\begin{equation}\label{wla}
w=\prod_i\cosh\theta_i=\prod_i\f{1}{\sqrt{1-\la_i^2}},\;
\la_i=\tanh\theta_i.
\end{equation}
The distance between $P$ and $A$ in the canonical Riemannian metric
on $\grs{n}{m}^m$ is (see \cite{W} for example)
$$d(P, A)=\sqrt{\sum_i\th_i^2}.$$
For the fixed $A\in G_{n,m}^m,$ which is spanned by $\{a_i\}$,
choose time-like $\{a_{n+s}\}$ such that $\{a_i, a_{n+s}\}$ form an
orthonormal Lorentzian bases of $R^{n+m}_m.$

Set
\begin{equation*} \aligned
e_i&=\cosh\theta_i a_i+\sinh\theta_ia_{n+i}\\
e_{n+i}&=\sinh\theta_ia_i+\cosh\theta_ia_{n+i}\,(\;\text{and}\;
e_{n+\a}=a_{n+\a}\; \text{if}\; m>n) .\endaligned\end{equation*}
Then $e_i\in T_pM, e_{n+i}\in N_pM.$ In this
case\begin{eqnarray}\label{wial} w_{i\,\a}&=&\langle e_1\w\cdots\w
e_{i-1}\w e_\a\w e_{i+1}\cdots\w e_n,a_1\w\cdots\w
a_n\rangle\\&=&\cosh\theta_1\cosh\theta_{i-1}\sinh\theta_i\cosh\theta_{i+1}\cosh\theta_n=\la_i
w\delta_{n+i\ \a},\end{eqnarray} which is obtained by replacing
$e_i$ by $e_\a$ in $w$. We also have $w_{i\a j\beta}$ by replacing
$e_j$ by $e_\beta$   in $w_{i\,\a}.$ We obtain
\begin{eqnarray}\label{wialjbe}
w_{i\a
j\be}=\left\{\begin{array}{lll}\la_i\la_jw&&\a=n+i,\beta=n+j\\-\la_i\la_jw&&\a=n+j,\beta=n+i\\0&&otherwise.
\end{array}\right.\end{eqnarray}

\bigskip

\Section{Drift Laplacian of some geometric quantities}{Drift
Laplacian of some geometric quantities}

\medskip

The second fundamental form $B$ can be viewed as a cross-section of
the vector bundle Hom($\odot^2TM, NM$) over $M.$ A connection on
Hom($\odot^2TM, NM$) can be induced from those of $TM$ and $NM$
naturally. There is a natural fiber metric on Hom($\odot^2TM,
NM$) induced from the ambient space and it becomes a Riemannian
vector bundle. There is the trace-Laplace operator $\n^2$ acting on
any Riemannian vector bundle.

In \cite{X} we already calculate $\n^2 B$ for general space-like
$n-$submanifolds in $\ir{m+n}_m$.

Set $$B_{i\,j}=B_{e_i\,e_j}=h_{i\,j}^\a e_\a,\;
S_{\a\,\beta}=h_{i\,j}^\a h_{i\,j}^\beta.$$ From Proposition 2.1 in
\cite{X} we have
\begin{equation}\aligned\label{tLB}
\langle
\nabla^2B,B\rangle=\langle\nabla_i\nabla_jH,B_{i\,j}\rangle+\langle
B_{i\,k},H\rangle\langle
B_{i\,l},B_{k\,l}\rangle-|R^\perp|^2-\sum_{\a,\beta}S_{\a\,\beta}^2,\endaligned\end{equation}
where $R^\perp$ denotes the curvature of the normal bundle and
$$|R^\perp|^2=-\langle
R_{e_i\,e_j}\nu_\a,R_{e_i\,e_j}\nu_\a\rangle.$$

Then from the self-shrinker equation (\ref{ss}) we  obtain
\begin{eqnarray*}
\n_i F^N&=&[\bar{\n}_i(F-\langle F,e_j\rangle
e_j)]^N\\&=&[e_i-\bar{\n}_i\langle F,e_j\rangle e_j-\langle
F,e_j\rangle\bar{\n}_{e_i}e_j]^N\\&=&-\langle F,e_j\rangle B_{i\,j},
\end{eqnarray*}
and\begin{eqnarray*} \n_i\n_j F^N&=&-\n_i[\langle F,e_k\rangle
B_{k\,j}]\\&=&-\de_i^kB_{k\,j}-\langle F^N,B_{k\,i}\rangle
B_{k\,j}-\langle F,e_k\rangle\n_i B_{k\,j}\\&=&-B_{i\,j}-\langle
F^N,B_{k\,i}\rangle B_{k\,j}-\langle F,e_k\rangle\n_k
B_{i\,j}\\&=&-B_{i\,j}+\langle 2H,B_{k\,i}\rangle B_{k\,j}-\langle
F,e_k\rangle\n_k B_{i\,j}.\end{eqnarray*} Set $P_{i\,j}=\langle
B_{i\,j},H\rangle,$ then
\begin{equation}\aligned\label{hij}
\n_i\n_j H=\f{1}{2}B_{i\,j}-P_{k\,i}B_{k\,j}+\f{1}{2}\langle
F,e_k\rangle\n_k B_{i\,j}.
\endaligned
\end{equation}
Substituting (\ref{hij}) into (\ref{tLB})we obtain
\begin{eqnarray*}
\langle\nabla^2B,B\rangle&=&\langle\f{1}{2}B_{i\,j},B_{i\,j}\rangle-\langle
H,B_{k\,i}\rangle\langle B_{k\,j},B_{i\,j}\rangle+\f{1}{2}\langle
F,e_k\rangle\langle \nabla_k B_{i\,j},B_{i\,j}\rangle\\&&+\langle
B_{i\,k},H\rangle\langle B_{i\,l},
B_{k\,l}\rangle-|R^\perp|^2-\sum_{\a,\beta}S^2_{\a\,\beta}.\end{eqnarray*}
This also means that
\begin{equation}\aligned
\langle\nabla^2B,B\rangle=\f{1}{2}\langle B,B\rangle+\f{1}{4}\langle
F^T,\nabla\langle
B,B\rangle\rangle-|R^\perp|^2-\sum_{\a,\beta}S^2_{\a\,\beta}.\endaligned\label{3.7}
\end{equation}
Note that $\Delta\langle
B,B\rangle=2\langle\nabla^2B,B\rangle+2\langle\nabla B,\nabla
B\rangle,$ so\begin{equation}\aligned \Delta\langle
B,B\rangle&=\langle B,B\rangle+\f{1}{2}\langle F^T,\nabla\langle
B,B\rangle\rangle-2|R^\perp|^2-2\sum_{\a,\beta}S^2_{\a\,\beta}\\&+2\langle\nabla
B,\nabla B\rangle.\endaligned\label{3.8}
\end{equation}
We denote $$|B|^2=-\langle B,B\rangle=\sum_{i,j,\a}h_{\a ij}^2,\;
|\n B|^2=-\langle\n B,\n B\rangle.$$
$$|H|^2=-\langle H,H\rangle,\; |\n H|^2=-\langle \n H,\n H\rangle$$ then
\begin{equation}\aligned
\Delta|B|^2=|B|^2+\f{1}{2}\langle
F^T,\nabla|B|^2\rangle+2|R^\perp|^2+2\sum_{\a,\beta}S^2_{\a\,\beta}+2|\nabla
B|^2\endaligned\label{DeB}\end{equation}

From (\ref{hij}) we  also obtain
\begin{equation*}\aligned
\n^2 H=\f{1}{2}H-P_{k\,i}B_{k\,j}+\f{1}{2}\langle F,e_k\rangle\n_k
H.
\endaligned
\end{equation*}
Since\begin{eqnarray*} \De |H|^2=-\De\langle
H,H\rangle=-2\langle\n^2 H,H\rangle-2\langle\n H,\n
H\rangle,\end{eqnarray*} we obtain
\begin{equation}\aligned\label{DeH}
\De |H|^2&=-2\langle \f{1}{2}H-P_{k\,i}B_{k\,i}+\f{1}{2}\langle
F,e_k\rangle\n_k H,H\rangle-2\langle\n H,\n
H\rangle\\&=|H|^2+2|P|^2+\f{1}{2}\langle F^T,\n |H|^2\rangle+2|\n
H|^2,\endaligned
\end{equation}
where $|P|^2=\sum_{i, j} P_{ij}^2$.

In the pseudo-Grassmann manifold $\grs{n}{m}^m$ there are
$w-$functions with respect to a fixed point $A\in \grs{n}{m}^m$, as
shown in \S 2. For the space-like $n-$submanifold $M$ in
$\ir{m+n}_m$ we define the Gauss map $\g:M\to  \grs{n}{m}^m$, which
is obtained by parallel translation of $T_pM$ for any $p\in M$ to
the origin in $\ir{m+n}_m$. Then, we have  functions $w\circ\g$ on
$M$, which is still denoted by $w$ for notational simplicity.

For any point $p\in M$ around $p$ there is a local tangent frame
field $\{e_i\}$, and which is normal at $p$. We also have a local
orthonormal normal frame field $\{e_\a\}$, and which is normal at
$p$. Define a $w-$function by
$$w=\left<e_1\w\cdots\w e_n, a_1\w\cdots\w a_n\right>,$$
where $\{a_i\}$ is a fixed orthonormal vectors which span a fixed
space-like $n-$plane $A$. Denote $$e_{i\,\a}=e_1\w\cdots\w
e_\a\w\cdots \w e_n,$$ which is got by substituting $e_\a$ for $e_i$
in $e_1\w\cdots\w e_n$ and  $e_{i\a j\beta}$ is obtained by
substituting $e_\beta$ for $e_j$ in $e_{i\,\a}.$ Then
\begin{equation}\aligned
\nabla_{e_j}w&=\sum_{i=1}^n\langle
e_1\w\cdots\bar{\nabla}_{e_j}e_i\w\cdots\w e_n,a_1\w\cdots\w
a_n\rangle\\&=\sum_{i=1}^n\langle e_1\w\cdots\w B_{i\,j}\cdots
e_n,a_1\w\cdots\w a_n\rangle\\&=\sum_{i=1}^n h_{i\,j}^\a \langle
e_1\cdots\w e_\a\w\cdots\w e_n,a_1\w\cdots\w
a_n\rangle\\&=\sum_{i=1}^n h_{i\,j}^\a \langle
e_{i\,\a},a_1\w\cdots\w a_n\rangle.\endaligned\label{nw}
\end{equation}

Furthermore,
\begin{eqnarray}
\n_{e_i}\n_{e_j}w&=&\langle\bar{\n}_{e_i}\bar{\n}_{e_j}(e_1\w\cdots\w
e_n),a_1\w\cdots\w a_n\rangle\nonumber\\&=&\sum_{k\neq l}\langle
e_1\w\cdots\w \bar{\n}_{e_j}e_k\w\cdots\w\bar{\n}_{e_i}e_l\w\cdots\w
e_n,a_1\w\cdots\w a_n\rangle\nonumber\\&&+\sum_k\langle
e_1\w\cdots\bar{\n}_{e_i}\bar{\n}_{e_j}e_k\w\cdots\w
e_n,a_1\w\cdots\w a_n\rangle\nonumber\\&=&\sum_{k\neq l}\langle
e_1\w\cdots\w B_{j\,k}\w\cdots\w B_{il}\w\cdots e_n,a_1\w\cdots\w
a_n\rangle\label{3.13}\\&&+\sum_k\langle
e_1\w\cdots\w(\bar{\n}_i\bar{\n}_je_k)^T\w\cdots\w e_n,a_1\w\cdots\w
a_n\rangle\label{3.14}\\&&+\sum_k\langle
e_1\w\cdots\w(\bar{\n}_i\bar{\n}_je_k)^N\w\cdots\w e_n,a_1\w\cdots\w
a_n\rangle\label{3.15}\end{eqnarray}

Note that
\begin{eqnarray*}
(\ref{3.13})&=&\sum_{k\neq l}h_{j\,k}^\a h_{i\,l}^\beta\langle e_{\a
k\beta l},a_1\w\cdots\w a_n\rangle\\
(\ref{3.14})&=&\langle\bar{\n}_i\bar{\n}_j e_k,e_k\rangle w=-\langle
\bar{\n}_je_k,\bar{\n}_ie_k\rangle w=-\langle
B_{j\,k},B_{i\,k}\rangle w=h_{j\,k}^\a h_{i\,k}^\a
w\\
(\ref{3.15})&=&-\langle(\bar{\n}_i\bar{\n}_je_k)^N,e_\a\rangle\langle
e_{\a\,k},a_1\w\cdots\w
a_n\rangle\\
&=&-\langle(\bar{\n}_i(B_{j\,k}+\n_{e_j}e_k))^N,e_\a\rangle\langle
e_{\a\,k},a_1\w\cdots\w
a_n\rangle\\
&=&-\langle\n_iB_{j\,k},e_\a\rangle\langle e_{\a\,k},a_1\w\cdots\w
a_n\rangle=-\langle\n_kB_{i\,j},e_\a\rangle\langle
e_{\a\,k},a_1\w\cdots\w a_n\rangle,\end{eqnarray*} where we use the
Codazzi equation in the last step. Thus, we obtain
$$\De w=\sum_{i,k\neq l}h_{i\,k}^\a h_{i\,l}^\beta\langle e_{k\beta l}^\a ,a_1\w\cdots\w a_n\rangle+|B|^2w
-\langle\n_kH,e_\a\rangle\langle e_{\a k},a_1\w\cdots\w
a_n\rangle,$$ Since $$\n_i F^N=-\langle F,e_j\rangle B_{i\,j},$$
from (\ref{ss}), we obtain \begin{equation} \aligned
\n_iH&=\f{1}{2}\langle F,e_j\rangle
B_{i\,j}\\\langle\n_iH,e_\a\rangle&=-\f{1}{2}\langle F,e_j\rangle
h_{i\,j}^\a,\endaligned\label{nh}
\end{equation}
so,
\begin{equation}\aligned
\De w&=|B|^2w+\sum_{i,k\neq l}h_{i\,k}^\a h_{i\,l}^\beta\langle
e_{\a k\beta l},a_1\w\cdots\w a_n\rangle+\f{1}{2}\langle
F,e_i\rangle h_{k\,i}^\a \langle e_{\a k},a_1\w\cdots\w
a_n\rangle\\&=|B|^2w+\sum_{i,k\neq l}h_{i\,k}^\a
h_{i\,l}^\beta\langle e_{\a k\beta l},a_1\w\cdots\w
a_n\rangle+\f{1}{2}\langle F,\nabla w\rangle,\endaligned\label{Dew}
\end{equation}
where   (\ref{nw}) has been used in the last equality.

\begin{pro}\label{dLBHw}
For a space-like self-shrinker $M$ of dimension $n$ in $\ir{m+n}_m$
we have
\begin{equation}
\mc{L}|B|^2=|B|^2+2|R^\perp|^2+2\sum_{\a,\beta}S^2_{\a\,\beta}+2|\nabla
B|^2, \label{dLB}\end{equation}
\begin{equation}
\mc{L} |H|^2=|H|^2+2|P|^2+2|\n H|^2, \label{dLH}\end{equation}
\begin{equation}\label{dLlogw}
\mc{L}(\ln w)\ge\f{|B|^2}{w^2}.
\end{equation}
\end{pro}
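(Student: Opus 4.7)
The first two identities are bookkeeping. Equations (\ref{DeB}) and (\ref{DeH}) have already isolated a drift term $\tfrac12\langle F^T,\nabla(\cdot)\rangle$ alongside the intrinsic pieces. Since $\nabla|B|^2$ and $\nabla|H|^2$ are tangent to $M$, we have $\langle F,\nabla|B|^2\rangle = \langle F^T,\nabla|B|^2\rangle$, and similarly for $|H|^2$. Subtracting $\tfrac12\langle F,\nabla(\cdot)\rangle$ per the definition (\ref{dL}) of $\mc L$ then yields (\ref{dLB}) and (\ref{dLH}) at once.

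For (\ref{dLlogw}) the plan is to start from the chain-rule identity $\mc L \ln w = \mc L w / w - |\nabla\ln w|^2$, feed in (\ref{Dew}), and work at a fixed $p\in M$ in the special Lorentzian frame from \S 2 that diagonalizes $W^TW$. In this frame (\ref{wial}) gives $\nabla_j\ln w = \sum_i h^{n+i}_{ij}\lambda_i$, while (\ref{wialjbe}) makes the cross-term in (\ref{Dew}) explicit. The piece of that cross-term with $\a = n+k,\,\be = n+l$, rearranged using the symmetry $h^\a_{ij} = h^\a_{ji}$, collapses to $|\nabla\ln w|^2 - \sum_{i,k}(h^{n+k}_{ik})^2\lambda_k^2$, and the $|\nabla\ln w|^2$ produced here cancels exactly the gradient term coming from the chain rule. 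Once this consolidation is done I expect the clean identity
$$\mc L\ln w \;=\; |B|^2 \;-\; \sum_{i}\tr\!\bigl((M^{(i)})^2\bigr), \qquad M^{(i)}_{kl} = h^{n+l}_{ik}\lambda_l,$$
with no implicit summation in the definition of $M^{(i)}$.

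To close the estimate I would apply the trace bound $\tr(N^2)\le\|N\|_F^2$, valid for any real square matrix $N$ by Cauchy--Schwarz applied to $\sum_{k,l}N_{kl}N_{lk}$. This gives
$$\sum_i \tr\!\bigl((M^{(i)})^2\bigr) \;\le\; \sum_{i,k,l}(h^{n+l}_{ik})^2\lambda_l^2 \;\le\; \bigl(\max_l\lambda_l^2\bigr)\,|B|^2.$$
Finally, from $w = \prod_l \cosh\theta_l \ge \cosh\theta_l$ for every $l$, one reads off $\lambda_l^2 = 1 - \cosh^{-2}\theta_l \le 1 - w^{-2}$, so $\max_l\lambda_l^2 \le 1-w^{-2}$, and therefore $\mc L\ln w \ge |B|^2 - (1-w^{-2})|B|^2 = |B|^2/w^2$.

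The main obstacle I anticipate is the algebraic consolidation: engineering the exact cancellation of $|\nabla\ln w|^2$ against one piece of the mixed term, and recognizing that the residue assembles into $\sum_i \tr((M^{(i)})^2)$ rather than a less tractable quadratic form. Once that rearrangement is in hand, the Cauchy--Schwarz trace bound together with the universal pointwise inequality $\lambda_l^2\le 1-w^{-2}$ delivers the conclusion without further surprises.
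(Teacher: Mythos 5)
Your proposal is correct and follows essentially the paper's own route: the first two identities are obtained exactly as in the paper by subtracting the drift term from (\ref{DeB})--(\ref{DeH}), and for (\ref{dLlogw}) your ``clean identity'' is precisely the paper's intermediate line $\mc{L}(\ln w)=|B|^2-\sum_{i,j,k}\la_i\la_k h^{n+k}_{ij}h^{n+i}_{jk}$ (after the same cancellation of $|\n \ln w|^2$ against part of the cross-term), merely rewritten in trace form. Your closing bound $\tr(N^2)\le \|N\|_F^2$ combined with $\max_l\la_l^2\le 1-w^{-2}$ is the same estimate the paper carries out term-by-term via $2ab\le a^2+b^2$ and $1-\la_i^2\ge\prod_j(1-\la_j^2)=w^{-2}$, just packaged more compactly.
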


\begin{proof}
From (\ref{dL},\ref{DeB},\ref{DeH}), we can obtain (\ref{dLB}) and
(\ref{dLH}) easily.

From (\ref{dL},\ref{Dew}) we have
\begin{equation}\aligned
\mc{L} w&=|B|^2w+\sum_{i,k\neq l}h_{i\,k}^\a h_{i\,l}^\beta\langle e_{\a k\beta l},a_1\w\cdots\w a_n\rangle
=|B|^2w+\sum_{i,k\neq l}h_{i\,k}^\a h_{i\,l}^\beta w_{\a k\beta l}\\&=
|B|^2w+\sum_{i,k\neq l}\la_k\la_l(h_{i\,k}^{n+k}h_{i\,l}^{n+l}-h_{i\,k}^{n+l}h_{i\,l}^{n+k})w,
\endaligned\label{dLw}\end{equation}

Furthermore, since
$$\mc{L}(\ln w)=\f{1}{w}\mc{L} w-\f{|\nabla w|^2}{w^2},$$
we obtain
$$
\mc{L}(\ln w)=|B|^2+\sum_{i,k\neq
l}\la_k\la_l(h_{i\,k}^{n+k}h_{i\,l}^{n+l}-h_{i\,k}^{n+l}h_{i\,l}^{n+k})-\f{|\nabla
w|^2}{w^2}.$$ From (\ref{nw}),we  obtain
$$\aligned
|\nabla
w|^2&=\sum_{j=1}^n|\nabla_{e_j}w|^2=\sum_{j=1}^n(\sum_{i=1}^n\sum_\a
h_{ij}^\a
w_{i\,\a})^2\\
&=\sum_{j=1}^n(\sum_{i=1}^nh_{ij}^{n+i}\la_iw)^2=\sum_{i,j,k=1}^n
\la_i\la_kw^2 h_{i\,j}^{n+i}h_{k\,j}^{n+k}.\endaligned$$ in the case
of $m\ge n$ we rewrite (otherwise, we treat the situation similarly)
$$|B|^2=\sum_{j,k,\a>n}(h^{n+\a}_{jk})^2+\sum_{i,j}(h_{ij}^{n+i})^2+\sum_j\sum_{k<i}(h_{ij}^{n+k})^2
+\sum_j\sum_{i<k}(h_{ij}^{n+k})^2.$$
So, we obtain
\begin{equation}\aligned
\mc{L}(\ln w)&=|B|^2+\sum_{i,j,k\neq i}\la_i\la_k(h_{i\,j}^{n+i}h_{j\,k}^{n+k}-h_{i\,j}^{n+k}h_{j\,k}^{n+i})-
\sum_{i,j,k=1}^n\la_i\la_kh_{ij}^{n+i}h_{j\,k}^{n+k}\\
&=|B|^2+\sum_{i,j,k\neq i}\la_i\la_kh_{i\,j}^{n+i}h_{j\,k}^{n+k}-\sum_{i,j,k\neq i}\la_i\la_kh_{i\,j}^{n+k}h_{j\,k}^{n+i}-\sum_{i,j,k=1}^n\la_i\la_kh_{ij}^{n+i}h_{j\,k}^{n+k}\\
&=\sum_{j,k,\a>n}(h^{n+\a}_{jk})^2+\sum_{i,j}(h_{i\,j}^{n+i})^2+\sum_{j}\sum_{k<i}(h_{ij}^{n+k})^2+\sum_j\sum_{i<k}(h_{ij}^{n+k})^2\\
&\hskip2in-\sum_{i,j}\la_i^2(h_{ij}^{n+i})^2-\sum_{ij,k\neq i}\la_i\la_kh_{ij}^{n+k}h_{jk}^{n+i}\\
&=\sum_{j,k,\a>n}(h^{n+\a}_{jk})^2+\sum_{i,j}(1-\la_i^2)(h_{ij}^{n+i})^2+\sum_j\sum_{k<i}(h_{ij}^{n+k})^2+\sum_j\sum_{i<k}(h_{ij}^{n+k})^2\\
&\hskip2.5in-2\sum_j\sum_{k<i}\la_k\la_ih_{jk}^{n+i}h_{ij}^{n+k}\\
&\ge\sum_{j,k,\a>n}(h^{n+\a}_{jk})^2+\sum_{i,j}(1-\la_i^2)(h_{ij}^{n+i})^2+\sum_j\sum_{k<i}(1-\la_i^2)(h_{ij}^{n+k})^2\\
&\hskip3.2in+\sum_j\sum_{i<k}(1-\la_i^2)(h_{ij}^{n+k})^2\\
&=\sum_{j,k,\a>n}(h^{n+\a}_{jk})^2+\sum_{i,j,k}(1-\la_i^2)(h_{ij}^{n+k})^2\\
&\ge\sum_{j,k,\a>n}(h^{n+\a}_{jk})^2+\prod_i(1-\la_i^2)\sum_{i,j,k}(h_{ij}^{n+k})^2.\endaligned\end{equation}
Noting (\ref{wla}) the inequality (\ref{dLlogw}) has been proved.

\begin{rem}
For a space-like graph $M=(x, f(x))$ with $f:\ir{n}\to\ir{m}$ its induced metric is $ds^2=(\de_{ij}-f^\a_if^\a_j)dx^idx^j.$ Set
$g=\det(\de_{ij}-f^\a_if^\a_j)$ then $w=\f{1}{\sqrt{g}}$.
\end{rem}
\begin{rem}
(\ref{dLlogw}) is a generalization of a formula (5.8) for space-like
graphical self-shrinkers in \cite{D-W} to more general situation.
\end{rem}

\end{proof}

\bigskip
\Section{Volume growth}{Volume growth}

\medskip

To draw our results we intend to integrate those differential
inequalities obtained in the last section. We need to know the
volume growth in the pseudo-distance function $z$ on the space-like
submanifolds.  In \cite{J-X} the following property has been
proved.

\begin{pro}(Proposition 3.1 in \cite{J-X})
Let $M$ be a space-like $n-$submanifold in $\ir{m+n}_m$. If $M$ is
closed with respect to the Euclidean topology, then when $0\in M$,
$z=\left<F, F\right>$ is a proper function on $M$.
\end{pro}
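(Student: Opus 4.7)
The goal is to show that $\{p\in M: z(p)\le c\}$ is compact for every $c>0$. Since $M$ is closed in the Euclidean topology, this set is automatically closed in $\ir{n+m}$, and compactness reduces to Euclidean boundedness. The plan is to reduce the problem to an entire space-like graph over $\ir{n}$, and then give a radial monotonicity/compactness argument.

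First I would establish that $M$ is the entire graph of a smooth map $f:\ir{n}\to\ir{m}$ with $f(0)=0$. The projection $\pi:\ir{n+m}_m\to\ir{n}$, $(x,y)\mapsto x$, restricted to $T_pM$ has trivial kernel, because $T_pM$ is space-like whereas $\{0\}\times\ir{m}$ is time-like; hence $\pi|_M$ is a local diffeomorphism. Combined with the closedness of $M$ and the simple connectedness of $\ir{n}$, a covering-space argument upgrades this to a global diffeomorphism $\pi|_M:M\to\ir{n}$, so $M=\{(x,f(x)):x\in\ir{n}\}$. The space-like condition translates into the pointwise operator-norm bound $\|Df(x)\|<1$, and integrating along radial segments from the origin yields the strict inequality $|f(x)|<|x|$ for every $x\ne 0$.

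Next I would exploit the factorization
$$z(x)=|x|^2-|f(x)|^2=b(x)\,c(x),\qquad b(x):=|x|-|f(x)|,\ c(x):=|x|+|f(x)|.$$
Along any fixed ray $x=\rho\omega$ with $\omega\in S^{n-1}$, the pointwise bound $\|Df\|<1$ together with
$$\bigl||f(\rho_1\omega)|-|f(\rho_2\omega)|\bigr|\le\int_{\rho_2}^{\rho_1}\|Df(s\omega)\|\,ds<\rho_1-\rho_2\qquad(\rho_1>\rho_2\ge 0)$$
implies that both $\rho\mapsto b(\rho\omega)$ and $\rho\mapsto c(\rho\omega)$ are strictly increasing from the value $0$ at $\rho=0$. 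Fix any $\rho_0>0$: by continuity and compactness of $S^{n-1}$, there exists $\delta_0>0$ with $b(\rho_0\omega),c(\rho_0\omega)\ge\delta_0$ uniformly in $\omega$. Monotonicity in $\rho$ then gives $b(x),c(x)\ge\delta_0$ for every $x$ with $|x|\ge\rho_0$. Since $b(x)+c(x)=2|x|$, at least one of the two factors is $\ge|x|$, and so $z(x)\ge\delta_0|x|$ for $|x|\ge\rho_0$. Via the graph identification, the squared Euclidean norm on $M$ is $|x|^2+|f(x)|^2\le 2|x|^2$, so a bound $z(p)\le c$ forces $|x|$, and hence $|p|$ in the Euclidean norm, to be bounded. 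Combined with closedness, this yields compactness of $\{z\le c\}$.

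The main obstacle is the globalization step: upgrading the obvious local graph structure to a single global graph over $\ir{n}$ requires a careful topological argument that uses the closedness of $M$ to rule out both multi-sheeted behavior and vertical escape to infinity of $M$ above a bounded region of $\ir{n}$. The causal structure of $\ir{n+m}_m$ enters essentially here through the space-likeness of tangent planes. Once this structural result is in hand, the radial monotonicity argument above is elementary.
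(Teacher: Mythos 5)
Note first that this paper does not prove the proposition at all: it is imported verbatim from \cite{J-X} (``In \cite{J-X} the following property has been proved''), so the comparison is with the classical Cheng--Yau/Jost--Xin argument rather than with anything in this text. Your route is essentially that classical one: reduce to an entire graph $M=\{(x,f(x))\}$ over $\ir{n}$ with $f(0)=0$ and pointwise $\|Df\|<1$, then bound $z=|x|^2-|f(x)|^2$ from below. Your quantitative half is correct and complete, and is in fact a clean, slightly stronger version of what is needed: the strict radial monotonicity of $b(\rho\omega)=\rho-|f(\rho\omega)|$ holds because on a compact segment $\max\|Df\|<1$ makes the integral inequality strict; compactness of $S^{n-1}$ gives the uniform $\delta_0$ on $\{|x|=\rho_0\}$; and since $c(x)=|x|+|f(x)|\ge|x|$ automatically, you get the linear lower bound $z(x)\ge\delta_0|x|$ for $|x|\ge\rho_0$, which is more than properness. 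Combined with $|f(x)|<|x|$ and closedness of $M$ in $\ir{m+n}$, the sublevel sets $\{z\le c\}$ are indeed compact.

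The one genuine gap is the structural step you yourself flag: the claim that closedness upgrades the local graph property to a single entire graph is asserted, not proved, and it is the heart of the matter. To close it you must actually run the lifting argument: lift straight segments of $\ir{n}$ through $\pi|_M$ on a maximal parameter interval; on each local sheet the vertical speed of the lift is strictly less than the horizontal speed, so the lift of a finite segment stays in a bounded region of $\ir{m+n}$; closedness of $M$ then places the limit point in $M$, and the local-diffeomorphism property continues the lift past it. Hence every path lifts, $\pi|_M$ is a covering map, and simple connectedness of $\ir{n}$ together with connectedness of $M$ makes it a diffeomorphism. Two points deserve explicit mention. First, connectedness of $M$ is not decorative: without it the proposition is false --- take the union of the plane $\{y=0\}$ with the graph of $f(x)=(\sqrt{1+|x|^2},0,\dots,0)$, which is a closed space-like submanifold containing the origin on which $z\equiv-1$ on the second component, so $z$ is not proper; the covering argument is precisely where connectedness enters, so your write-up should state the hypothesis and its use. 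Second, ``closedness rules out vertical escape'' is the correct slogan, but the mechanism is the completeness of maximal lifts just described, not a direct bound on $|y|$ over bounded $x$-regions (a priori a fiber $\pi^{-1}(x)$ could be an unbounded discrete set, and only the covering conclusion excludes this). With these details supplied, your proof is a correct and self-contained re-derivation of the cited result.
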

we also need a lemma from \cite{D-X}:
\begin{lem}\label{DXL}
If $f(r)$ is a monotonic increasing nonnegative function on
$[0,+\infty)$ satisfying $f(r)\le C_1r^nf(\f r2)$ on $[C_2,+\infty)$
for some positive constant $n,C_1,C_2$, here $C_2>1$,
 then $f(r)\le C_3e^{2n(\log r)^2}$ on $[C_2,+\infty)$ for some positive constant $C_3$ depending only on
  $n,C_1,C_2,f(C_2)$.
\end{lem}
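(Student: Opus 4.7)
The plan is to iterate the functional inequality $f(r) \le C_1 r^n f(r/2)$ a logarithmic number of times, until the argument on the right drops into $[0, C_2]$, where the hypothesis no longer applies but monotonicity takes over. A single iteration gives $f(r) \le C_1^2\, r^n (r/2)^n f(r/4)$, and inductively, as long as $r/2^{k-1} \ge C_2$, one obtains
\[
f(r) \le C_1^k \Bigl(\prod_{j=0}^{k-1}\frac{r}{2^j}\Bigr)^n f(r/2^k) = C_1^k\, r^{nk}\, 2^{-nk(k-1)/2}\, f(r/2^k).
\]

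First I would choose $k = k(r) := \lceil \log_2(r/C_2)\rceil$, the smallest integer for which $r/2^k \le C_2$, so that $k \le \log_2 r + O(1)$ for $r \ge C_2$. By monotonicity $f(r/2^k) \le f(C_2)$, which is a constant depending only on the data of the lemma. Taking logarithms in the displayed bound yields
\[
\log f(r) \le k\log C_1 + nk \log r - \frac{nk(k-1)}{2}\log 2 + \log f(C_2).
\]

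Next I would substitute $k \le \log_2 r + O(1)$. The negative quadratic-in-$k$ term partially cancels the cross term $nk\log r$, but the difference remains of order $(\log r)^2$, with a leading coefficient strictly smaller than $2n$ (a direct computation gives $n/(2\log 2)$ at leading order). All remaining terms are $O(\log r)$, and they are absorbed into the exponential $e^{2n(\log r)^2}$ after choosing a sufficiently large multiplicative constant $C_3=C_3(n,C_1,C_2,f(C_2))$.

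There is no substantive obstacle here: the argument is an elementary iteration. The only point requiring care is the boundary step, to ensure the hypothesis actually applies at each intermediate stage $r/2^{j-1} \ge C_2$ for $j=1,\dots,k$, together with the bookkeeping to confirm that $2n$ is a comfortable upper bound for the leading $(\log r)^2$ coefficient --- which it is, with considerable room to spare, so the specific value of the logarithm base is immaterial.
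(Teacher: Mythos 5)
Your iteration argument is correct and complete: the choice $k=\lceil\log_2(r/C_2)\rceil$ guarantees the hypothesis applies at every intermediate scale, monotonicity handles the final step, and the leading coefficient $n/(2\log 2)\approx 0.72\,n$ of the $(\log r)^2$ term is indeed comfortably below $2n$, so $C_3$ can absorb all lower-order terms. The paper itself states this lemma without proof, citing \cite{D-X}, and the proof there is exactly this elementary iteration, so your approach matches the source.
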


 Using the similar method as in \cite{D-X} we obtain the following volume growth
 estimates.
\begin{thm}\label{vg}
Let $z=\langle F, F\rangle$ be the pseudo-distance of
$R_{m}^{n+m},$ where $F\in R_m^{n+m}$ is the position vector with respect to the origin $0\in M$. Let
$M$ be an $n-$dimensional space-like self-shrinker of $R^{n+m}_m$.
Assume that $M$ is closed with respect to the Euclidean topology,  then for any $\a>0$,
$\int_Me^{-\a z}d\mu$ is finite, in particular $M$  has  finite weighted volume.
\end{thm}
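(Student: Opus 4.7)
The plan is to compute $\mathcal{L}z$ explicitly on the self-shrinker, and then to adapt the Ding--Xin volume-growth argument to the pseudo-distance function. For the first step, a direct calculation gives $\nabla z = 2F^T$, $\Delta z = 2n + 2\langle F,H\rangle$, and $\langle F,\nabla z\rangle = 2|F^T|^2$. Using the self-shrinker equation $H=-F^N/2$ together with the fact that $H$ is time-like, one obtains $\langle F,H\rangle = 2|H|^2$, so $\Delta z = 2n + 4|H|^2$. The orthogonal decomposition $z = \langle F,F\rangle = |F^T|^2 - 4|H|^2$ then collapses the drift Laplacian to the clean identity
\[\mathcal{L}z = 2n - z\quad\text{on }M,\]
the pseudo-Euclidean analogue of the Colding--Minicozzi identity.

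Since $z$ is proper on $M$ by Proposition 4.1, the sublevel sets $D_r := \{x\in M : z(x)\le r\}$ are compact for every $r>0$, so $V(r) := \mathrm{Vol}(D_r)$ is a well-defined monotone function with $V(0)=0$. The coarea formula and the divergence theorem give
\[V'(r) = \int_{\partial D_r}\frac{1}{|\nabla z|}\,dA,\qquad \int_{\partial D_r}|\nabla z|\,dA = \int_{D_r}\Delta z\,d\mu \ge 2n\,V(r),\]
together with the pointwise formula $|\nabla z|^2 = 4|F^T|^2 = 4(z+4|H|^2)\ge 4z$. These, combined with the Cauchy--Schwarz inequality $|\partial D_r|^2 \le V'(r)\int_{\partial D_r}|\nabla z|\,dA$, are the basic ingredients.

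The core step, and what I expect to be the main obstacle, is to extract from these ingredients a recursive inequality of the form
\[V(r) \le C_1\,r^n\,V(r/2)\qquad\text{for all }r\ge C_2,\]
in the spirit of \cite{D-X}. The strategy is to apply Stokes' theorem to the divergence-form identity $\mathcal{L}z\cdot e^{-z/4} = \mathrm{div}(e^{-z/4}\nabla z)$ on annular regions $D_r\setminus D_{r/2}$, and to combine the resulting boundary-flux identity with the reverse-isoperimetric estimate from the previous paragraph, feeding back the coarea description of $V'$. The delicate point is that the unbounded term $|H|^2$ appearing in $|\nabla z|^2$ cannot be controlled pointwise, so it must be absorbed using the geometric identity $\Delta z - 2n = 4|H|^2$ and the self-shrinker structure; this is essentially the Ding--Xin mechanism transported to the level sets of the pseudo-distance.

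Once the recursion is in hand, Lemma 4.2 applies directly to give $V(r) \le C_3\,e^{2n(\log r)^2}$ for all $r\ge C_2$. The conclusion then follows immediately: by the layer-cake formula and integration by parts,
\[\int_M e^{-\alpha z}\,d\mu = \alpha\int_0^\infty e^{-\alpha r}V(r)\,dr \le C_3\alpha\int_0^\infty e^{-\alpha r}e^{2n(\log r)^2}\,dr < \infty\]
for every $\alpha>0$, since $e^{-\alpha r}$ decays exponentially while $V(r)$ grows only sub-exponentially and the boundary term at infinity vanishes for the same reason. The special case $\alpha=1/4$ yields the finiteness of the weighted volume $\int_M\rho\,d\mu$.
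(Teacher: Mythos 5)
Your preliminary computations are all correct and match the paper's: $\nabla z = 2F^T$, $\Delta z = 2n + 4|H|^2 = 2n + Y^2$, $|\nabla z|^2 = 4X^2 = 4(z+4|H|^2)$, and hence $\mathcal{L}z = 2n - z$. The final layer-cake summation from $V(r)\le C e^{2n(\log r)^2}$ to $\int_M e^{-\alpha z}\,d\mu < \infty$ is also sound. But the heart of the theorem is precisely the recursion $V(r)\le C_1 r^n V(r/2)$, and this is the one step you do not prove: you explicitly flag it as ``the main obstacle'' and offer only a strategy (Stokes on annuli with the fixed weight $e^{-z/4}$, coarea, and Cauchy--Schwarz). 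That strategy as described does not work. Your ingredients $\int_{\partial D_r}|\nabla z|\,dA = \int_{D_r}\Delta z\,d\mu \ge 2nV(r)$ and $|\nabla z|\ge 2\sqrt{z}$ produce \emph{lower} bounds on volume growth, not the needed upper bound; and the single-scale identity $\int_{D_r}(2n-z)e^{-z/4}\,d\mu = \int_{\partial D_r}e^{-z/4}|\nabla z|$ obtained from $\mathcal{L}z = 2n - z$ compares nothing at scale $r$ with anything at scale $r/2$.

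The missing idea in the paper is a one-parameter family of Gaussian weights: one sets $F_t(\Omega)=(4\pi t)^{-n/2}\int_\Omega e^{-z/(4t)}\,d\mu$ on $D_r=\{z<r^2\}\cap M$ and proves $F_t'(D_r)\le 0$ for $0<t\le 1$, using the pointwise inequality
\[
-\,e^{\frac{z}{4t}}\,\mathrm{div}\bigl(e^{-\frac{z}{4t}}\nabla z\bigr)
= -\Delta z + \frac{1}{4t}|\nabla z|^2
= \frac{X^2}{t} - Y^2 - 2n \;\ge\; \frac{z}{t} - 2n
\qquad (0<t\le 1),
\]
where the absorption of the unbounded term $Y^2=4|H|^2$ that you correctly identified as the delicate point happens automatically because $z = X^2 - Y^2$ and $Y^2 \le Y^2/t$ for $t\le 1$; the outward flux $\int_{\partial D_r} 2X e^{-z/4t}\ge 0$ then gives monotonicity. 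Integrating in $t$ from $1/r$ to $1$ yields
\[
\int_{D_r} e^{-\frac{z}{4}}\,d\mu \;\le\; r^{\frac{n}{2}}\int_{D_r} e^{-\frac{zr}{4}}\,d\mu,
\]
and it is exactly the normalization $t^{-n/2}$ across scales that produces the factor $r^{n/2}$; splitting the right-hand integral over $D_{r/2}$ and $D_r\setminus D_{r/2}$ (where $z\ge r^2/4$, so $e^{-zr/4}\le e^{-r^3/16}$) gives the recursion for $r$ large. No fixed-weight Stokes identity, coarea formula, or Cauchy--Schwarz argument at a single scale can replace this multi-scale monotonicity, so as written your proposal has a genuine gap at its central step, even though the surrounding computations and the deduction of the theorem from the recursion are correct.
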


\begin{proof}
We have
\begin{equation*} z_i\mathop{=}\limits^{def.} e_i(z)= 2\left<F,
e_i\right>,\end{equation*}
\begin{equation*}
z_{ij}\mathop{=}\limits^{def.} Hess (z)(e_i,e_j) =2 \left(\de_{ij}
-y^\a h_{ij}^\a\right),
\end{equation*}
\begin{equation}
\De z = 2 n - 2 y^\a H^\a =2n+Y^2,
\label{Dez}
\end{equation}
where the self-shrinker equation (\ref{ss}) has been used in third
equality. For our self-shrinker $M^n$ in $\R^{n+m}_m$, we define a
functional $F_t$ on any set $\Om\subset M$ by
$$F_t(\Om)=\f1{(4\pi t)^{n/2}}\int_{\Om}
e^{-\f{z}{4t}}d\mu, \quad \mathrm{for} \quad t>0.$$

Set $B_r=\{p\in\ir{m+n}_m, z(p)<r^2\}$ and $D_r=B_r\bigcap M$.  We
differential $F_t(D_r)$ with respect to $t$,
$$F_t'(D_r)=(4\pi)^{-\f n2} t^{-(\f n2+1)}\int_{D_r} (-\f n2+\f{z}{4t})e^{-\f{z}{4t}}d\mu.$$
Noting (\ref{Dez})
\begin{equation}\aligned
-e^{\f{z}{4t}}\mathrm{div}(e^{-\f{z}{4t}}\n z)&=-\De z+\f1{4t}\n z\cdot\n z\\
&=-2n-Y^2+\f{X^2}t\\
&\ge\f{z}t-2n\quad(\ when\ 0<t\le 1\ ).
\endaligned
\end{equation}
Since $$\n z=2F^T$$ and the unit normal
vector to $\p{D_r}$ is $\f{F^T}{X}$,  then
\begin{equation}\aligned\label{2.2}
F_t'(D_r)\le&\pi^{-\f n2}(4t)^{-(\f n2+1)}\int_{D_r} -\mathrm{div}(e^{-\f{z}{4t}}\n z)d\mu\\
=&\pi^{-\f n2}(4t)^{-(\f n2+1)}\int_{\p D_r} -2Xe^{-\f{z}{4t}}\le 0.
\endaligned
\end{equation}
We integrate $F_t'(D_r)$ over $t$ from $\f{1}{r}$ to $1, \, r\ge 1$,   and get
\begin{equation*}
\int_{D_r}e^{-\f{z}{4}}d\mu\le r^{\f{n}{2}}\int_{D_r}e^{-\f{zr}{4}}d\mu,\label{d005}
\end{equation*}
Since
$$\int_{D_r}e^{-\f{z}{4}}d\mu\ge e^{-\f{r^2}{4}}\int_{D_r}1d\mu$$
and
$$\aligned\int_{D_r}e^{-\f{zr}{4}}d\mu&=&\int_{D_r\backslash D_{\f{r}{2}}}e^{-\f{zr}{4}}d\mu+\int_{D_\f{r}{2}}e^{-\f{zr}{4}}d\mu\\
&\le&e^{-\f{r^3}{16}}\int_{D_r}1d\mu+\int_{D_\f{r}{2}}1d\mu.\label{d006}\endaligned
$$
Set $V(r)=\int_{D_r}1d\mu$. Then,
$$ (e^{-\f{r^2}{4}}-e^{-\f{r^3}{16}}r^\f{n}{2})V(r)\le r^\f{n}{2}V(\f{r}{2}).\label{d007}$$
Let $g(r)=e^{-\f{r^2}{4}}-e^{-\f{r^3}{16}}r^\f{n}{2}$. $g(r)>0$ when
$r$ sufficiently large (say $r\ge 8n$)
Since
$$\aligned
g'(r)&=-\f{r}{2}e^{-\f{r^2}{4}}-\f{n}{2}r^{\f{n}{2}-1}e^{-\f{r^3}{16}}+\f{3r^2}{16}r^{\f{n}{2}}e^{-\f{r^3}{16}}\\
&>(-\f{r}{2}-\f{n}{2}r^{\f{n}{2}-1}+\f{3}{16}r^{\f{n}{2}+2})e^{-\f{r^3}{16}}>0,\endaligned$$
$g(r)$ is increasing in $r$ and $g^{-1}(r)$ is decreasing in $r$. Therefore,
$$
g^{-1}(r)\le\f{1}{e^{-16n^2}-e^{-32n^3}(8n)^{\f{n}{2}}}=C_1\label{d008}$$
We then have
$$V(r)\le C_1r^nV(\f r2) \quad \quad \text{for $r$ sufficiently large
(say, $r\ge8n$)}.$$ By Lemma \ref{DXL}, we have
$$V(r)\le C_4e^{2n(\log r)^2} \quad for \quad r\ge8n,$$ here $C_4$
is a constant depending only on $n,V(8n)$. Hence, for any $\a>0$
\begin{equation*}\aligned
\int_M e^{-\a z}d\mu=&\sum_{j=0}^{\infty}\int_{D_{8n(j+1)}\setminus D_{8nj}}e^{-\a z}d\mu\le\sum_{j=0}^{\infty}e^{-\a (8nj)^2}V(8n(j+1))\\
\le&C_4\sum_{j=0}^{\infty}e^{-\a (8nj)^2}e^{2n(\log (8n)+\log(j+1))^2}\le C_5,
\endaligned
\end{equation*}
where $C_5$ is a constant depending only on $n,V(8n)$. So we obtain
our estimates. Certainly,  $M$ has weighted finite volume.
\end{proof}

\begin{cor}
Any space-like self-shrinker $M$ of dimension $n$ in $\ir{m+n}_m$ with  closed  Euclidean topology has
finite fundamental group.
\end{cor}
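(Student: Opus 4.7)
The plan is to deduce finiteness of $\pi_1(M)$ from the finite weighted volume $\int_M e^{-z/4}\,d\mu<\infty$ furnished by Theorem \ref{vg}, via a standard universal-cover argument. First I would let $\pi:\tilde M\to M$ be the universal covering and equip $\tilde M$ with the pullback Riemannian metric and pullback immersion $\tilde F=F\circ\pi$. Because the self-shrinker equation $H=-X^N/2$ is local and $\pi$ is a local isometry, $\tilde M$ is itself a space-like self-shrinker in $\ir{m+n}_m$, and the pullback pseudo-distance $\tilde z=z\circ\pi$ satisfies the same pointwise identities, in particular $\Delta_{\tilde M}\tilde z=2n+\tilde Y^2$.

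Next I would argue that the weighted-volume estimate of Theorem \ref{vg} lifts to $\tilde M$, i.e.\ $\int_{\tilde M}e^{-\tilde z/4}\,d\tilde\mu<\infty$. The key point is that the image $\tilde F(\tilde M)=F(M)$ is the same closed Euclidean subset of $\R^{m+n}$, so one should be able to reproduce the calculation of the monotone quantity $F_t(\tilde D_r)$ from Theorem \ref{vg} and derive the same recursion $\tilde V(r)\le C_1 r^n\tilde V(r/2)$ for large $r$; Lemma \ref{DXL} then yields the same $e^{2n(\log r)^2}$ growth upstairs, and the geometric-series argument at the end of Theorem \ref{vg} gives $\int_{\tilde M}e^{-\alpha\tilde z}\,d\tilde\mu<\infty$ for every $\alpha>0$.

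With this in hand the conclusion is immediate. The deck group $\mathrm{Deck}(\pi)\cong\pi_1(M)$ acts freely by isometries on $\tilde M$ and preserves $\tilde z$, so choosing any Borel fundamental domain yields
$$\int_{\tilde M}e^{-\tilde z/4}\,d\tilde\mu \;=\; |\pi_1(M)|\cdot\int_M e^{-z/4}\,d\mu.$$
Since the left-hand side is finite and the right-hand factor $\int_M e^{-z/4}\,d\mu$ is strictly positive, we conclude $|\pi_1(M)|<\infty$.

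The hard part is the middle paragraph. Theorem \ref{vg} was proved under the hypothesis that $M$ is closed in the Euclidean topology, used to secure properness of $z$ and hence compactness of the sublevel sets $D_r$ needed for the divergence step on $\partial D_r$. When $\pi_1(M)$ is infinite, the sublevel sets $\tilde D_r=\pi^{-1}(D_r)$ of $\tilde z$ are typically non-compact, so the divergence-theorem argument cannot be applied naively to $\tilde M$ and must be performed $\pi_1$-equivariantly or pushed down to $M$ with a $\pi_1$-weighted measure; circumventing this technicality is really the content of the corollary.
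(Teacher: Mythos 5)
Your overall skeleton --- pass to the universal cover $\pi:\tilde M\to M$, note that the shrinker equation and all pointwise identities lift, and finish with the deck-group counting identity $\int_{\tilde M}e^{-\tilde z/4}\,d\tilde\mu=|\pi_1(M)|\int_M e^{-z/4}\,d\mu$ together with strict positivity of the base integral --- is exactly the standard route to this corollary, and your first and third paragraphs are fine. But the middle step, finiteness of the weighted volume of $\tilde M$, is a genuine gap and not a removable technicality, as you yourself concede. The proof of Theorem \ref{vg} is irreducibly extrinsic: it needs properness of $z$ so that $D_r$ is compact (to apply the divergence theorem on $\partial D_r$ and to know $V(r)<\infty$ before running the recursion of Lemma \ref{DXL}). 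On $\tilde M$, properness of $\tilde z=z\circ\pi$ fails precisely when $\pi_1(M)$ is infinite --- the very situation to be excluded --- so the argument is circular at its crucial point. Your observation that $\tilde F(\tilde M)=F(M)$ as subsets of $\ir{m+n}_m$ buys nothing: the weighted volume is computed with the intrinsic measure $d\tilde\mu$, which counts every sheet of the cover, so no monotonicity over the common image can control it.

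The paper closes this loop by a different, intrinsic mechanism, whose ingredients it records immediately after the corollary: by the Gauss equation and the shrinker equation, $\text{Ric}_f\ge\f{1}{2}$ with $f=\f{z}{4}$. This is a pointwise, local inequality, so it lifts verbatim to $(\tilde M,\pi^*g,\tilde f)$; then the Wei--Wylie comparison (\ref{WW}), which requires only completeness and the Bakry--\'Emery lower bound (no properness of $\tilde z$), gives $\int_{B_R(p)}\rho\le A+B\int_r^R e^{-\f{1}{2}t^2+Ct}\,dt$, whose right-hand side converges as $R\to\infty$; hence $\int_{\tilde M}e^{-\tilde z/4}\,d\tilde\mu<\infty$ and your counting identity finishes the proof. (Equivalently, one may quote Wylie's theorem, reproved in \cite{W-W}, that a complete smooth metric measure space with $\text{Ric}_f\ge\lambda>0$ has finite fundamental group.) Note the trade-off: this route uses completeness of the induced metric on $M$ (hence on $\tilde M$) rather than the properness that drives Theorem \ref{vg}; so the correct repair of your proof is to replace the attempted equivariant rerun of the monotonicity argument by the curvature-dimension argument, not to push the divergence-theorem computation upstairs.
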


From the Gauss equation we have
$$\text{Ric}(e_i,e_i)=\left<H,B_{ii}\right>-\sum_j\left<B_{ij},B_{ij}\right>,$$
and
$$\text{Hess}(f)(e_i,e_i)=\f{1}{4}\text{Hess}(z)(e_i,e_i)=\f{1}{2}\de_{ij}+\f{1}{2}\left<F,B_{ij}\right>=\f{1}{2}\de_{ij}-\left<H,B_{ij}\right>,$$
It follows that
$$\text{Ric}_f(e_i, e_i)=\text{Ric}(e_i,e_i)+\text{Hess}(f)(e_i,e_i)\ge\f{1}{2}.$$
Set $B_R(p)\subset M$, a geodesic ball of radius $R$ and centered at $p\in M$. From Theorem 3.1 in \cite{W-W} we know that for any $r$ there are constant $A,\;B$
and $C$ such that
\begin{equation}
\int_{B_R(p)}\rho\le A+B\int_r^R e^{-\f{1}{2}t^2+Ct}dt\label{WW}
\end{equation}

\bigskip

\Section{Rigidity results}{Rigidity  results}
\medskip

Now, we are in a position to prove  rigidity results mentioned in
the introduction.

\begin{thm}
Let $M$ be a space-like self-shrinker of dimension $n$ in
$R^{n+m}_m,$ which is closed with respect to the Euclidean topology.
If there is a constant $\a<\f{1}{8}$, such that $|H|^2\le e^{\a z}$, then $M$
is an affine $n-$plane.
\end{thm}
\begin{proof}
Let $\e$ be a smooth function with compact support in $M,$ then by
(\ref{dLH}) we obtain
\begin{equation}\aligned\label{5.1}
\int_M(\f{1}{2}|H|^2+|P|^2+|\n
H|^2)\e^2\rho&=\f{1}{2}\int_M(\mathcal{L}|H|^2)\e^2\rho=\f{1}{2}\int_M\text{div}(\rho\nabla|H|^2)\e^2\\
&=-\int_M\e\rho\n|H|^2\cdot\n\e\\&=2\int_M\e\rho\langle\n_i H,H\rangle\cdot\n_i\e\\
&\le\int_M|\n
H|^2\e^2\rho+\int_M|H|^2|\n\e|^2\rho.\endaligned\end{equation} We
then have
\begin{equation}\label{5.2}
\int_M \left(\f{1}{2}|H|^2+|P|^2\right)\e^2\rho \le\int_M |H|^2|\n\e|^2\rho.
\end{equation}

Let $\e=\phi(\f{|F|}{r})$ for any $r>0,$ where $\phi$ is a
nonnegative function on $[0,+\infty)$ satisfying
\begin{equation*}\aligned
\phi(x)=\left\{\begin{array}{lllll}1&\quad& &if& x\in [0,1)\\0&\quad& &if& x\in[2,+\infty),\end{array}\right.\endaligned
\end{equation*}
and $|\phi'|\le C$ for some absolute constant. Since $\n z=2F^T$,
$$\n\e=\frac 1r\phi'\n\sqrt{z}=\frac 1r\phi'\frac{F^T}{\sqrt{z}}.$$
By (\ref{ss}) we have
$$|\n\e|^2\le\frac{C^2}{r^2}\frac{|F^T|^2}{z}=\f{1}{r^2z}C^2(z+4|H|^2).$$
It follows that (\ref{5.2}) becomes
\begin{equation}\label{5.3}
\int_{D_r}\left(\f{1}{2}|H|^2+|P|^2\right)\rho\le\f{C^2}{r^2}\int_{D_{2r}\setminus
D_r}|H|^2(1+\frac{4|H|^2}{z})\rho.
\end{equation}

By  Theorem \ref{vg}  then under the condition on  $|H|$, we obtain
that the right hand side of (\ref{5.3}) approaches to zero as
$r\rightarrow +\infty.$ This implies that $H\equiv0.$

According to Theorem 3.3 in \cite{J-X} we see that $M$ is complete
with respect to the induced metric from $\ir{m+n}_m$. In a geodesic
ball $B_a(x)$ of radius $a$ and centered at $x\in M$ we can make
gradient estimates of $|B|^2$ in terms of the mean curvature. From
(2.9) in \cite{J-X} we have
$$|B|^2\le k\f{2m(n-4)a^2}{(a^2-r^2)^2}.$$
Since $M$ is complete we can fix $x$ and let $a$ go to infinity.
Hence, $|B|^2=0$ at any $x\in M$ and $M$ is an $n-$plane.
\end{proof}

\begin{thm}
Let $M$ be a complete space-like self-shrinker of dimension $n$ in
$R^{n+m}_m$. If there is a constant $\a<\f{1}{2}$, such that $\ln w\le e^{\a d^2(p, x)}$ for certain
$p\in M$, where $d(p, \cdot)$ is the distance function from $p$, then $M$ is affine $n-$plane.
\end{thm}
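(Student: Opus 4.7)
The plan is to run a cutoff-integration argument parallel to the proof of the previous theorem, but based on the inequality $\mc{L}(\ln w)\ge |B|^2/w^2$ from Proposition~\ref{dLBHw}, carried out on intrinsic geodesic balls $B_R(p)$ (available because $M$ is complete and, by the Bakry-Emery bound $\text{Ric}_f\ge\f12$ established at the end of Section~4, admits the weighted-volume estimate~(\ref{WW})). Fix $p\in M$, let $d(\cdot)=d(p,\cdot)$, and choose a standard cutoff $\eta=\phi(d/R)$ supported in $B_{2R}(p)$, equal to $1$ on $B_R(p)$, with $|\nabla\eta|\le C/R$. Note that $\ln w\ge 0$ since $w=\prod_i\cosh\theta_i\ge 1$.

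First I would test the key inequality against $\eta^2\rho$ using self-adjointness of $\mc{L}$:
\[
\int_M \f{|B|^2}{w^2}\eta^2\rho\ \le\ \int_M \mc{L}(\ln w)\,\eta^2\rho\ =\ -2\int_M \eta\,\nabla\ln w\cdot\nabla\eta\,\rho.
\]
To control the gradient factor, I would derive a Caccioppoli-type estimate for $\ln w$: the identity $\mc{L}((\ln w)^2)=2\ln w\cdot\mc{L}(\ln w)+2|\nabla\ln w|^2$ together with $\ln w\ge 0$ and $\mc{L}(\ln w)\ge 0$ yields $\mc{L}((\ln w)^2)\ge 2|\nabla\ln w|^2$; testing this against $\eta^2\rho$ and absorbing via Young's inequality gives
\[
\int_M |\nabla\ln w|^2\,\eta^2\rho\ \le\ 4\int_M(\ln w)^2|\nabla\eta|^2\rho.
\]
Applying Cauchy--Schwarz in the previous display and inserting this bound then produces
\[
\int_M \f{|B|^2}{w^2}\eta^2\rho\ \le\ 4\,\Bigl(\int_M(\ln w)^2|\nabla\eta|^2\rho\Bigr)^{1/2}\Bigl(\int_M|\nabla\eta|^2\rho\Bigr)^{1/2}.
\]

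The concluding step is to show that the right-hand side vanishes as $R\to\infty$. On the support of $|\nabla\eta|$ we have $d\in[R,2R]$ and $|\nabla\eta|^2\le C/R^2$, while the growth hypothesis gives $(\ln w)^2\le e^{2\alpha d^2}$. Combining these with the Gaussian-type decay of the weighted measure furnished by~(\ref{WW}), the exponential growth of $(\ln w)^2$ is dominated by the decay of $\rho$ precisely in the regime $\alpha<\f12$, so both factors on the right are controlled and their product, divided by $R^2$, tends to $0$. Once this is in hand, monotone convergence gives $\int_M|B|^2w^{-2}\rho=0$, hence $|B|\equiv 0$ on $M$. A complete, totally geodesic space-like $n$-submanifold of $\ir{m+n}_m$ passing through the origin is therefore an affine $n$-plane.

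The main obstacle is this final quantitative balance: extracting from~(\ref{WW}) a sharp enough annular weighted-volume decay to cancel the $e^{2\alpha d^2}$ factor with only the extra $1/R^2$ gain from $|\nabla\eta|^2$. This is what pins down the precise threshold $\alpha<\f12$; everything else is a direct adaptation of the scheme used for the previous theorem, with $|H|^2$ replaced by $|B|^2/w^2$ and the pseudo-distance sublevel sets $D_r$ replaced by the intrinsic geodesic balls $B_R(p)$.
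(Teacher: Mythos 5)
Your route is genuinely different from the paper's: the paper disposes of this theorem in two lines, quoting the Liouville theorem of Wei--Wylie (Corollary 4.2 of \cite{W-W}, fed by the bound $\text{Ric}_f\ge\frac12$ and the weighted volume estimate (\ref{WW})) to conclude that the nonnegative $f$-subharmonic function $\ln w$ is constant, whence $|B|\equiv 0$ from (\ref{dLlogw}). What you propose is a self-contained unpacking of that citation, and its first two steps are correct as written: testing $\mathcal{L}(\ln w)\ge |B|^2/w^2$ against $\eta^2\rho$ is legitimate by self-adjointness, and the Caccioppoli bound $\int|\nabla\ln w|^2\eta^2\rho\le 4\int(\ln w)^2|\nabla\eta|^2\rho$ follows from $\mathcal{L}((\ln w)^2)=2\ln w\,\mathcal{L}(\ln w)+2|\nabla\ln w|^2$ together with $\ln w\ge 0$ (valid since $w=\prod_i\cosh\theta_i\ge 1$), the absorption being licit because $\eta$ has compact support.

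The genuine gap is the final step, which you flag as ``the main obstacle'' and then assert rather than verify --- and with your stated cutoff it is false for most of the claimed range of $\alpha$. Taking $\eta=\phi(d/R)$ puts the support of $\nabla\eta$ in the annulus $R\le d\le 2R$, where the hypothesis only gives $(\ln w)^2\le e^{2\alpha(2R)^2}=e^{8\alpha R^2}$, while the Wei--Wylie comparison bounds the weighted volume of that annulus by $e^{-\frac12 R^2+CR}$. After the two square roots your right-hand side is of size $R^{-2}\,e^{(4\alpha-\frac12)R^2+O(R)}$, which tends to zero only for $\alpha<\frac18$ --- the same threshold as in the preceding theorem, not the claimed $\alpha<\frac12$. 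The repair is to abandon the $d/R$-scaling: take $\eta_R\equiv 1$ on $B_R(p)$, vanishing outside $B_{R+1}(p)$, with $|\nabla\eta_R|$ bounded by an absolute constant. Then on the unit-width transition annulus $(\ln w)^2\le e^{2\alpha(R+1)^2}$, each square-rooted factor carries a decay $e^{-\frac14R^2+O(R)}$, and the product is $e^{(\alpha-\frac12)R^2+O(R)}\to 0$ exactly when $\alpha<\frac12$; monotone convergence then gives $\int_M |B|^2 w^{-2}\rho=0$ as you intended. One further small point: the estimate (\ref{WW}) as displayed bounds balls, so to extract annulus decay you should invoke the annulus (or sphere-area) form of Theorem 3.1 in \cite{W-W}, namely that the weighted area of the geodesic sphere of radius $t$ is dominated by $e^{-\frac12 t^2+Ct}$; a difference of two upper bounds for balls proves nothing.
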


\begin{proof}
(\ref{dLlogw}) tells us
$$\mc{L}(\ln w)\ge\f{|B|^2}{w^2}\ge 0.$$
As an application to (\ref{WW}) (Theorem 3.1 in \cite{W-W}) the Corollary 4.2 in \cite{W-W} tells us that $\ln w$ is constant. This forces $|B|^2\equiv 0$.
\end{proof}

\bigskip

\bibliographystyle{amsplain}

\end{document}